\documentclass{article}
\usepackage[utf8]{inputenc}
\usepackage{amsmath}
\usepackage{amssymb}
\usepackage{amsthm}
\usepackage{amsfonts, hyperref, accents}
\usepackage{diagbox}
\usepackage[all]{xy}
\usepackage{pgf,tikz}
\usepackage{lipsum}
\usetikzlibrary{arrows}
\usepackage{tikz}
\usepackage{tikz-cd}
\usepackage{xcolor}

\newcommand{\Princ}[1]{\mathrm{P}_{#1}}

\title{Non-Noetherian representation categories of generalized fields}
\author{Ma\"el Denys, M\'arton Hablicsek, Giacomo Negrisolo}
\newcommand{\Addresses}{{
  \bigskip
  \footnotesize

  M. Denys, \textsc{Department of Mathematics, Leiden University, Leiden, Netherlands,
  }\par\nopagebreak
  \textit{E-mail address}, 
  \texttt{maeldenys@outlook.com}

  \medskip

  M. Hablicsek, \textsc{Department of Mathematics, Leiden University, Leiden, Netherlands}\par\nopagebreak
  \textit{E-mail address}, \texttt{m.hablicsek@math.leidenuniv.nl}

  \medskip

  G. Negrisolo, \textsc{Department of Mathematics, Leiden University, Leiden, Netherlands}\par\nopagebreak
  \textit{E-mail address},  \texttt{gnegrisolo.gn@gmail.com}

}}

\date{\today}
\hyphenation{iso-mor-phisms pa-pers}

%
\DeclareFontFamily{U}{rsf}{}
\DeclareFontShape{U}{rsf}{m}{n}{
  <5> <6> rsfs5 <7> <8> <9> rsfs7 <10-> rsfs10}{}
\DeclareMathAlphabet{\mathscr}{U}{rsf}{m}{n}

\DeclareMathAlphabet{\mathgth}{U}{euf}{m}{n}

\DeclareFontFamily{U}{cyr}{}
\DeclareFontShape{U}{cyr}{m}{n}{
  <5> wncyr5 <6> wncyr6 <7> wncyr7 <8> wncyr8 <9> wncyr9 <10-> wncyr10}{}
\DeclareMathAlphabet{\mathcyr}{U}{cyr}{m}{n}





\makeatletter
\def\operator@font{\sf}
\makeatother

\setlength{\unitlength}{1 true in}

\newcommand{\D}{{\mathbf D}}
\newcommand{\E}{{\mathbf E}}

\newcommand{\op}{{\mathsf{op}}}

\DeclareMathOperator{\Rep}{Rep}

\newcommand{\Hom}{{\mathsf{Hom}}}

\DeclareMathOperator{\Ob}{obj}

\DeclareMathOperator{\Mod}{Mod}
\DeclareMathOperator{\Vect}{Vect}

\newcommand{\ra}{\rightarrow}

\newcommand{\field}[1]{\mathbb{#1}}

\newcommand{\F}{\field{F}}
\newcommand{\B}{\field{B}}

\newcommand{\CC}{\mathcal{C}}
\newcommand{\DD}{\mathcal{D}}
\renewcommand{\phi}{\varphi}

\newcommand{\plus}{\mathop{\dot+}}
\newcommand{\pplus}{\mathop{\underaccent{\dot}{+}}}

\newtheorem{theorem}{Theorem}[section]
\newtheorem{proposition}[theorem]{Proposition}
\newtheorem{lemma}[theorem]{Lemma}

\newtheorem{corollary}[theorem]{Corollary}

\newtheorem{definition}[theorem]{Definition}

\newtheorem{Example}{Example}

\begin{document}

\maketitle

\abstract{In this paper, we show that the categories of finitely generated projective $\B$-modules and $\F_\infty$-modules with morphisms being (splittable) injections are not locally Noetherian. This provides another instance of the fact that these generalized fields have strange homological behavior.}
\section{Introduction}
The framework of \textbf{$FI$-modules} introduced by Church, Ellenberg and Farb \cite{church2015} and its generalization, the notion of \textbf{quasi-Gröbner Category}, introduced by Sam and Snowden \cite{samsno2017} provided a powerful framework to prove representation-theoretic results in various fields of mathematics. These results include the representation stability of cohomology groups of configuration spaces of points on an arbitrary (connected, oriented) manifold \cite{church2015, church2014}, the proof of the Lannes-Schwartz Artinian Conjecture \cite{putman2017, samsno2017}; and polynomial growth (on the number of edges) of the dimension of homology groups of configuration spaces of points on trees and graphs \cite{ProRam2019a, ProRam2019b}.

The Lannes-Schwartz Artinian Conjecture states (dually) that for any finite field, $\F_q$, the category of finitely generated vector spaces, $\Vect_{\F_q}$ over $\F_q$ with morphisms being splittable injections is \textbf{locally Noetherian}. It is natural to ask whether this statement extends to finite \textbf{generalized fields}. Generalized fields were introduced by Durov in his thesis \cite{durov2007} as a part of his new approach to Arakelov geometry \cite{arakelov1974, arakelov1975}. In this paper, we focus on two generalized fields, $\B$ and $\F_\infty$. In the theory of Durov, the generalized field, $\F_\infty$ is the \textbf{residue field at infinity}, while the Boolean semi-ring $\B$ is treated as a semi-field of characteristic one \cite{concon2019}. Even though basic algebro-geometric properties of the generalized fields, $\B$ and $\F_\infty$ resemble the properties of usual finite fields \cite{durov2007,joo2014,habjuh2018}, the homological properties are far from ordinary \cite{concon2019, sch2015}. 

In fact, in this paper, we show that the category of finitely generated projective $\B$-modules (or $\F_\infty$-modules) with morphisms being injections is not locally Noetherian illustrating another instance of the fact that generalized fields have strange homological behavior.

\section{$\B$-modules}

\begin{definition}
The Boolean semi-ring $\B$ is defined as the set $\{0,1\}$ equipped with two operations: a binary $\pplus$ and $\cdot$, with the following properties:
\begin{itemize}
\item $(a\pplus b)\pplus c=a\pplus (b\pplus c)$, $a\pplus b=b\pplus a$;
\item $0\pplus a=a\pplus 0=a$, hence $0$ is an \textbf{neutral element};
\item $a\pplus a=a$;
\item $(a\cdot b)\cdot c=a\cdot (b\cdot c)$, $a\cdot b=b\cdot a$;
\item $1\cdot a=a\cdot 1=a$.
\end{itemize}
\end{definition}
Given this algebraic structure, a $\B$-module is defined as follows.

\begin{definition}
\label{def:bmodule}
A \textbf{$\B$-module} is a structure $(V,0,-,\pplus)$ such that the following identities hold:
\begin{itemize}
\item $(a\pplus b)\pplus c=a\pplus (b\pplus c)$, $a\pplus b=b\pplus a$;
\item $a\pplus a=a$, $a\pplus 0=a$.
\end{itemize}
A notions of \textbf{submodule}, \textbf{congruence}, \textbf{quotient module} and \textbf{module homomorphism}, \textbf{finitely generated module} are defined as usual.
\end{definition}
Note that a $\B$-module is join-semilattice with a minimal element (0), the join of two elements is exactly $\pplus$.

We say that a $\B$-module $P$ is \textbf{projective}, if the usual lifting property holds: given any surjective morphism of $\B$-module $f:M\ra N$ and a $\B$-module morphism $g:P\ra N$, there exists a $\B$-module morphism $h:P\ra M$ so that $g=f\circ h$. It is not hard to show that a finitely generated $\B$-module $P$ is projective if and only if there exists a splittable surjection $F\rightarrow P$ from a free $\B$-module $F$ to $P$ (by splittable surjection, we mean a surjection which has a right inverse). In this way, the projective $\B$-modules can be equipped with a structure of a \textbf{finite distributive lattice} \cite{horn1971}. 

\begin{Example}\label{ex:b}
Consider the following finite distributive lattices of size $4n-3$ (see the figure below) denoted by $D_{n}$ (here $n>1$). The elements of the lattice are denoted by $a_{ik}$ representing the position of the element in the lattice. The operations (join and meet) $\vee$ and $\wedge$ are defined as follows:
\[a_{ik}\vee a_{lm}=a_{\max(i,l),\max(k,m)}, \quad a_{ik}\wedge a_{lm}=a_{\min(i,l),\min(k,m)}.\]
\begin{center}
\definecolor{ududff}{rgb}{0,0,0}
\definecolor{cqcqcq}{rgb}{0,0,0}
\begin{tikzpicture}[line cap=round,line join=round,>=triangle 45,x=.5cm,y=.5cm]
\clip(5.6721097057209438,4.741855581495774) rectangle (19.18008870615289,28.127014890185116);
\draw [line width=2.pt] (12.,28.)-- (10.,26.);
\draw [line width=2.pt] (10.,26.)-- (12.,24.);
\draw [line width=2.pt] (12.,24.)-- (14.,26.);
\draw [line width=2.pt] (14.,26.)-- (12.,28.);
\draw [line width=2.pt] (12.,20.)-- (10.,18.);
\draw [line width=2.pt] (10.,18.)-- (16.,12.);
\draw [line width=2.pt] (12.,20.)-- (16.,16.);
\draw [line width=2.pt] (14.,18.)-- (10.,14.);
\draw [line width=2.pt] (16.,16.)-- (10.,10.);
\draw [line width=2.pt] (16.,12.)-- (12.,8.);
\draw [line width=2.pt] (10.,14.)-- (14.,10.);
\draw [line width=2.pt] (10.,10.)-- (12.,8.);
\draw [line width=2.pt] (12.,6.)-- (12.,8.);
\begin{scriptsize}
\draw [fill=ududff] (12.,6.) circle (2.5pt);
\draw[color=ududff] (12.41700685660759,5.263624437166554) node {$O$};
\draw [fill=ududff] (12.,8.) circle (2.5pt);
\draw[color=ududff] (12.61700685660759,7.263624437166554) node {$a_{11}$};
\draw [fill=ududff] (14.,10.) circle (2.5pt);
\draw[color=ududff] (14.834755945813466,10.014645006989227) node {$a_{12}$};
\draw [fill=ududff] (12.,12.) circle (2.5pt);
\draw[color=ududff] (13.12560306398616,12.223797888816526) node {$a_{22}$};
\draw [fill=ududff] (10.,10.) circle (2.5pt);
\draw[color=ududff] (9.12430415693911,9.681187968222842) node {$a_{21}$};
\draw [fill=ududff] (14.,14.) circle (2.5pt);
\draw[color=ududff] (14.793073815967668,14.182857991569035) node {$a_{23}$};
\draw [fill=ududff] (16.,12.) circle (2.5pt);
\draw[color=ududff] (16.376994750108,12.765665576811902) node {$a_{13}$};
\draw [fill=ududff] (10.,14.) circle (2.5pt);
\draw[color=ududff] (9.249350546476505,14.683043549718613) node {$a_{32}$};
\draw [fill=ududff] (12.,16.) circle (2.5pt);
\draw[color=ududff] (13.12560306398616,16.266964483858942) node {$a_{33}$};
\draw [fill=ududff] (14.,18.) circle (2.5pt);
\draw[color=ududff] (14.709709556276072,18.51779949553204) node {$a_{34}$};
\draw [fill=ududff] (16.,16.) circle (2.5pt);
\draw[color=ududff] (16.793816048565983,16.308646613704738) node {$a_{24}$};
\draw [fill=ududff] (12.,20.) circle (2.5pt);
\draw[color=ududff] (12.292146025219775,20.768634507205135) node {$a_{44}$};
\draw [fill=ududff] (10.,18.) circle (2.5pt);
\draw[color=ududff] (9.3743969360139,18.72621014476103) node {$a_{43}$};
\draw [fill=ududff] (12.,24.) circle (2.5pt);
\draw[color=ududff] (13.12560306398616,23.444701466565206) node {$a_{n-1,n-1}$};
\draw [fill=ududff] (10.,26.) circle (2.5pt);
\draw[color=ududff] (9.3743969360139,25.47871517978032) node {$a_{n,n-1}$};
\draw [fill=ududff] (12.,28.) circle (2.5pt);
\draw[color=ududff] (13.12560306398616,27.77160343759837) node {$a_{nn}$};
\draw [fill=ududff] (14.,26.) circle (2.5pt);
\draw[color=ududff] (15.109709556276072,26.020582867775694) node {$a_{n-1,n}$};
\draw[color=black] (14.626345296584475,27.854596580990812) node {$$};
\draw[color=black] (14.626345296584475,24.895165361939146) node {$$};
\draw[color=black] (17.585776515636148,24.895165361939146) node {$$};
\draw[color=black] (17.585776515636148,27.854596580990812) node {$$};
\draw[color=black] (10.624860831387846,19.851627650597578) node {$$};
\draw[color=black] (12.62560306398616,14.891454198947603) node {$$};
\draw[color=black] (13.625974180285318,17.89256754784507) node {$$};
\draw[color=black] (11.625231947687004,16.850514301700116) node {$$};
\draw[color=black] (12.62560306398616,13.84940095280265) node {$$};
\draw[color=black] (13.625974180285318,10.848287603905188) node {$$};
\draw[color=black] (11.625231947687004,11.89034085005014) node {$$};
\draw[color=black] (10.624860831387846,8.889227501152678) node {$$};
\draw[color=ududff] (12.29288825781809,22.769376739803445) node {$\dots$};
\end{scriptsize}
\end{tikzpicture}
\end{center}
We remark that the $\B$-module structure is given by the join operation $\vee$.

It is easy to see that there exists a splittable surjection $F\ra D_n$. Indeed, the surjective map $g:\B[A_1,...,A_{2n-1}]\ra D_n$ from the free $\B$-module generated by $A_1$, $A_2$, ..., $A_{2n-1}$ given by
\[g(A_1)=a_{11}, g(A_2)=a_{12}, g(A_3)=a_{21},...,g(A_{2i})=a_{i-1,i+1},g(A_{2i+1})=a_{i+1,i}, ...\]
splits: the injection $h:D_n\rightarrow \B[A_1,...,A_{2n-1}]$ generated by
\[h(a_{11})=A_1, h(a_{12})=A_1+A_2, h(a_{21})=A_1+A_3 ...,\]
\[h(a_{i-1,i+1})=A_1\pplus A_2\pplus ...\pplus A_{2i-2}\pplus A_{2i},h(a_{i+1,i})=A_1\pplus A_2\pplus ...\pplus A_{2i-1}\pplus A_{2i+1}, \]
provides a right inverse.
\end{Example}

We have the following property for these distributive lattices.

\begin{proposition}\label{prop:bnonnot}
Let $D_n$ and $D_m$ be lattice as in Example \ref{ex:b}, with elements $a_{11}$, ..., $a_{nn}$, and $b_{11}$, ..., $b_{mm}$ respectively. Then, any injective $\B$-module morphism $f:D_n\ra D_m$ satisfying
\[f(a_{11})=b_{11}, f(a_{12})= b_{12}, f(a_{21})= b_{21}, f(a_{22})= b_{22}, \mbox{and}\]
\[f(a_{n-1,n-1})=b_{m-1,m-1}, f(a_{n-1,n})= b_{m-1,m}, f(a_{n, n-1})=b_{m, m-1}, f(a_{nn})= b_{mm}\]
is the identity morphism (and hence $n=m$). 
\end{proposition}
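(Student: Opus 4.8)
The plan is to trade the pictorial definition of $D_n$ in Example~\ref{ex:b} for a coordinate model and then to determine the values $f(a_{ij})$ one antidiagonal $\{i+j=r\}$ at a time. \emph{Step~1: a coordinate model.} From Example~\ref{ex:b} one checks that, as a $\B$-module, $D_\ell$ is the set
\[\{O\}\cup\{(i,j)\in\field{Z}^2 : 1\le i,j\le\ell,\ -1\le j-i\le 2\},\]
with $O$ the least element, with $\pplus$ equal to coordinatewise maximum, and with $a_{ij}\leftrightarrow(i,j)$. I would then record two elementary facts. First: for $(i,j)\ne(1,1)$ the elements of $D_\ell$ covered by $(i,j)$ are $(i-1,j)$ (present iff $i\ge2$ and $j-i\le1$) and $(i,j-1)$ (present iff $j\ge2$ and $j-i\ge0$); hence $(i,j)$ has a unique lower cover exactly when $(i,j)=(1,2)$ or $j-i\in\{-1,2\}$, and otherwise $a_{ij}=a_{i-1,j}\pplus a_{i,j-1}$ with both summands on the antidiagonal $\{i+j=r-1\}$. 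Second: a $\B$-module homomorphism preserves $0$ and, preserving $\pplus$, is order-preserving, so $f(O)=O$ and $f$ is monotone; and since $|D_\ell|=4\ell-3$, injectivity of $f$ forces $n\le m$, so every index $(i,j)$ occurring in $D_n$ also occurs in $D_m$ and $b_{ij}$ is defined.

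\emph{Step~2: the rigidity induction.} The core claim is $f(a_{ij})=b_{ij}$ for all $(i,j)\in D_n$, proved by induction on $r=i+j$ (each antidiagonal of $D_\ell$ has at most two elements). The base $r\le3$ records $f(a_{11})=b_{11}$, $f(a_{12})=b_{12}$, $f(a_{21})=b_{21}$ from the hypotheses. For the step with $r\ge4$: if $j-i\in\{0,1\}$, then (as $r\ge4$) $i,j\ge2$, $a_{ij}$ has the two lower covers $a_{i-1,j},a_{i,j-1}$ on antidiagonal $r-1$, and $a_{ij}=a_{i-1,j}\pplus a_{i,j-1}$, so the inductive hypothesis and $\pplus$-preservation give $f(a_{ij})=b_{i-1,j}\pplus b_{i,j-1}=b_{ij}$. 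If instead $j-i\in\{-1,2\}$, then $a_{ij}$ has a unique lower cover $w'$; let $z$ be the other element of antidiagonal $r-1$, so by induction $f(w')=b_{w'}$ and $f(z)=b_z$. Monotonicity and injectivity give $f(a_{ij})>b_{w'}$, and also $f(a_{ij})\not\ge b_z$: otherwise $f(a_{ij})=f(a_{ij})\pplus f(z)=f(a_{ij}\pplus z)$ contradicts injectivity, since $z$ lies on antidiagonal $r-1$ but is not the unique lower cover $w'$ of $a_{ij}$, hence $z\not\le a_{ij}$ and $a_{ij}\pplus z\ne a_{ij}$. Finally, writing $f(a_{ij})=b_{ab}$, the relations $b_{ab}>b_{w'}$ and $b_{ab}\not\ge b_z$ pin $(a,b)$ down to $(i,j)$: for $j-i=2$, say, $w'=(i,i+1)$ and $z=(i+1,i)$, so $b_{ab}>(i,i+1)$ forces $a\ge i$, $b\ge i+1$, while $b_{ab}\not\ge(i+1,i)$ forces $a<i+1$ (because $b\ge i+1>i$); thus $a=i$, then $b\ge i+2$ since $(a,b)\ne(i,i+1)$, and $b\le i+2$ by the band bound $j-i\le2$, so $b=i+2$. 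The case $j-i=-1$ is the mirror computation, using the band bound $j-i\ge-1$.

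\emph{Step~3: conclusion, and the main difficulty.} Applying the claim to $a_{nn}$ gives $f(a_{nn})=b_{nn}$, while the hypothesis gives $f(a_{nn})=b_{mm}$; since in $D_m$ one has $b_{nn}=(n,n)$ and $b_{mm}=(m,m)$, this forces $n=m$. Then $b_{ij}=a_{ij}$ for every index and $f(O)=O$, so $f=\id$. The step I expect to require the most care is the bookkeeping in Step~2: matching, on each antidiagonal, the join-reducible element (handled by the $\pplus$-formula) against the unique join-irreducible one (handled by the ``$f(a_{ij})>b_{w'}$, $f(a_{ij})\not\ge b_z$'' rigidity), and checking the last computation uniformly. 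The structural fact that makes it work is the bounded-width ``band'' shape $-1\le j-i\le2$ of $D_\ell$, which is exactly what collapses the two inequalities to a single solution.
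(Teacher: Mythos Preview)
Your proof is correct and follows essentially the same strategy as the paper's: an upward induction through the lattice that alternates between the join formula for the ``central'' elements and an order-theoretic uniqueness argument for the ``edge'' (join-irreducible) elements. Your version is considerably more careful---you set up an explicit coordinate model, organize the induction cleanly by antidiagonals $i+j=r$, and pin down each edge element via the local conditions $f(a_{ij})>b_{w'}$ and $f(a_{ij})\not\ge b_z$---whereas the paper argues more informally, characterizing each edge element as the unique not-yet-determined element that is not $\ge$ the most recently fixed central element (e.g.\ $a_{13}$ is the only undetermined element not above $a_{22}$). Both characterizations rest on the same fact, namely that an injective $\B$-module map reflects the order, and both exploit the bounded width of the band $-1\le j-i\le 2$; the difference is purely one of presentation. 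As a small bonus, your argument makes visible that the hypotheses on $a_{n-1,n-1}$, $a_{n-1,n}$, $a_{n,n-1}$ are actually redundant: the bottom square alone forces $f(a_{ij})=b_{ij}$ throughout, and then $f(a_{nn})=b_{mm}$ suffices to conclude $n=m$.
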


\begin{proof}
First, we show that $f(a_{13})=b_{13}$. Indeed, $a_{13}$ is the only element (other than $a_{11}$, $a_{12}$, $a_{21}$ and $a_{22}$) in the lattice which is not bigger than $a_{22}$. 

As a consequence, since $f$ is a $\B$-module map, we have to have that 
\[f(a_{23})=f(a_{22}+a_{13})=b_{22}+b_{13}=b_{23}.\] 
Now, we show that $f(a_{32})=b_{32}$. Indeed, $a_{32}$ is the only element (other than the previously discussed elements) which is not bigger than $a_{23}$. Hence, $f(a_{32})=b_{32}$. By continuing this process, we can show step by step that $f(a_{ij})=b_{ij}$ which concludes the proof.
\end{proof}

\begin{Example}\label{ex:bbmod}
Similarly, we consider the finite distributive lattice of 9 elements depicted in the figure below. We will denote this lattice by $\D_0$. Note that this is a sub-lattice of every $D_n$ for $n>3$.

\begin{center}
\definecolor{ududff}{rgb}{0,0,0}
\definecolor{cqcqcq}{rgb}{0,0,0}
\begin{tikzpicture}[line cap=round,line join=round,>=triangle 45,x=.5cm,y=.5cm]
\clip(5.6721097057209438,4.741855581495774) rectangle (19.18008870615289,22.127014890185116);

\draw [line width=2.pt] (12.,20.)-- (10.,18.);
\draw [line width=2.pt] (10.,18.)-- (12.,16.);
\draw [line width=2.pt] (12.,20.)-- (14.,18.);
\draw [line width=2.pt] (14.,18.)-- (12.,16.);
\draw [line width=2.pt] (12.,16.)-- (12.,12.);
\draw [line width=2.pt] (12.,12.)-- (10.,10.);
\draw [line width=2.pt] (14.,10.)-- (12.,8.);
\draw [line width=2.pt] (12.,12.)-- (14.,10.);
\draw [line width=2.pt] (10.,10.)-- (12.,8.);
\draw [line width=2.pt] (12.,6.)-- (12.,8.);
\begin{scriptsize}
\draw [fill=ududff] (12.,6.) circle (2.5pt);
\draw[color=ududff] (12.41700685660759,5.263624437166554) node {$O$};
\draw [fill=ududff] (12.,8.) circle (2.5pt);
\draw[color=ududff] (12.61700685660759,7.263624437166554) node {$A_{11}$};
\draw [fill=ududff] (14.,10.) circle (2.5pt);
\draw[color=ududff] (14.834755945813466,10.014645006989227) node {$A_{12}$};
\draw [fill=ududff] (12.,12.) circle (2.5pt);
\draw[color=ududff] (13.12560306398616,12.223797888816526) node {$A_{22}$};
\draw [fill=ududff] (10.,10.) circle (2.5pt);
\draw[color=ududff] (9.12430415693911,9.681187968222842) node {$A_{21}$};

\draw [fill=ududff] (12.,16.) circle (2.5pt);
\draw[color=ududff] (13.12560306398616,16.266964483858942) node {$A_{33}$};
\draw [fill=ududff] (14.,18.) circle (2.5pt);
\draw[color=ududff] (14.709709556276072,18.51779949553204) node {$A_{34}$};

\draw [fill=ududff] (12.,20.) circle (2.5pt);
\draw[color=ududff] (12.292146025219775,20.768634507205135) node {$A_{44}$};
\draw [fill=ududff] (10.,18.) circle (2.5pt);
\draw[color=ududff] (9.3743969360139,18.72621014476103) node {$A_{43}$};
\draw[color=black] (14.626345296584475,27.854596580990812) node {$$};
\draw[color=black] (14.626345296584475,24.895165361939146) node {$$};
\draw[color=black] (17.585776515636148,24.895165361939146) node {$$};
\draw[color=black] (17.585776515636148,27.854596580990812) node {$$};
\draw[color=black] (10.624860831387846,19.851627650597578) node {$$};
\draw[color=black] (12.62560306398616,14.891454198947603) node {$$};
\draw[color=black] (13.625974180285318,17.89256754784507) node {$$};
\draw[color=black] (11.625231947687004,16.850514301700116) node {$$};
\draw[color=black] (12.62560306398616,13.84940095280265) node {$$};
\draw[color=black] (13.625974180285318,10.848287603905188) node {$$};
\draw[color=black] (11.625231947687004,11.89034085005014) node {$$};
\draw[color=black] (10.624860831387846,8.889227501152678) node {$$};
\draw[color=ududff] (12.29288825781809,22.769376739803445) node {$\dots$};
\end{scriptsize}
\end{tikzpicture}
\end{center}
\end{Example}

\begin{lemma}\label{lem:splinjb}
The injections $i:\D_0\rightarrow D_n$ for $n>3$ defined as 
\[i(A_{11})=a_{11}, i(A_{12})=a_{12}, i(A_{21})=a_{21}, i(A_{22})=a_{22},\]
\[i(A_{33})=a_{n-1,n-1}, i(A_{34})=a_{n-1,n}, i(A_{43})=a_{n,n-1}, i(A_{44})=a_{nn}\]
are splittable injections of $\B$-modules.
\end{lemma}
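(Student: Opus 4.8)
The plan is to prove three things, using that a finitely generated projective $\B$-module may be regarded as a finite distributive lattice and that a $\B$-module morphism between two such is precisely a map preserving finite joins and the least element $O$ (but not necessarily meets): that $i$ is a well-defined $\B$-module morphism, that it is injective, and that there is a $\B$-module morphism $r\colon D_n\to\D_0$ with $r\circ i=\id_{\D_0}$.

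The first two points should be routine. Injectivity follows as soon as $i$ is known to be order-preserving, since for $n>3$ the nine prescribed values have pairwise distinct index pairs. For the morphism property, observe that the only joins of incomparable elements in $\D_0$ are $A_{12}\vee A_{21}=A_{22}$ and $A_{34}\vee A_{43}=A_{44}$, so it is enough to check that $i$ is monotone on covering relations and that $a_{12}\vee a_{21}=a_{22}$ and $a_{n-1,n}\vee a_{n,n-1}=a_{nn}$; all of this is immediate from the join rule of Example~\ref{ex:b}, the only place $n>3$ intervenes being the inequality $a_{22}<a_{n-1,n-1}$ --- i.e.\ that the ``top diamond'' of $D_n$ lies strictly above the ``bottom diamond''.

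The substance is the third point, and here the key observation is that the image $L:=i(\D_0)$ is not just a sub-$\B$-module of $D_n$ but a \emph{sublattice}. It is automatically closed under joins (because $i$ preserves joins), and one checks with $a_{pq}\wedge a_{rs}=a_{\min(p,r),\min(q,s)}$ that it is also closed under meets: the only cases needing attention are meets of a bottom-diamond element with a top-diamond element, and these collapse to the bottom-diamond element since $n-1>2$. Since moreover $L$ contains the top element $a_{nn}$ of $D_n$, the inclusion $L\hookrightarrow D_n$ preserves all meets, so --- $D_n$ and $L$ being finite, hence complete, lattices --- it has a left adjoint
\[
r\colon D_n\longrightarrow L,\qquad r(x)=\bigwedge\{\,y\in L : y\geq x\,\}.
\]
A left adjoint of posets preserves all joins and the least element, so $r$ is a $\B$-module morphism, and $r(i(z))=i(z)$ for all $z\in\D_0$ because $i(z)$ is the least element of $L$ lying above itself. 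Finally $i$ corestricts to a bijective join-preserving map $\D_0\to L$, whose inverse is then also join-preserving, so composing $r$ with this inverse yields a $\B$-module morphism $D_n\to\D_0$ splitting $i$; thus $i$ is a splittable injection.

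The real obstacle I expect is precisely the meet-closedness of $L$. The naive attempt --- retract $D_n$ by crushing its whole ``middle'' down to $a_{22}$ --- does not preserve joins, since the join-irreducible $a_{n-1,n-1}$ is forced to land on the join-irreducible $A_{33}$ while the ``extra'' generators $a_{j,j+2}$ just below the top diamond must be sent somewhere consistent with that. The adjunction argument avoids this entirely. If one wanted to remain elementary one could instead define $r$ explicitly by $a_{pq}\mapsto A_{11}\vee\beta(q-1)\vee\gamma(p-1)$, where $\beta$ and $\gamma$ are nondecreasing ``staircase'' functions valued in the two maximal chains $O<A_{12}<A_{22}<A_{33}<A_{34}$ and $O<A_{21}<A_{22}<A_{33}<A_{43}$ of $\D_0$ and jumping only at their last two steps, and then verify join-preservation coordinatewise and $r\circ i=\id$ on the nonzero elements of $\D_0$; but pinning down the correct staircases is exactly the bookkeeping that the adjoint description does for free.
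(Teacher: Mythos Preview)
Your argument is correct. The paper takes a more direct route: it simply writes down an explicit piecewise left inverse $j\colon D_n\to\D_0$ (sending the bottom diamond to itself, the middle of $D_n$ to $A_{33}$, the two elements $a_{n-1,n}$ and $a_{n-2,n}$ to $A_{34}$, $a_{n,n-1}$ to $A_{43}$, and $a_{nn}$ to $A_{44}$) and declares that this is a $\B$-module map, leaving the verification of join-preservation to the reader. Your approach is genuinely different and more conceptual: you observe that the image $L=i(\D_0)$ is a sublattice containing the top, so the inclusion $L\hookrightarrow D_n$ has a left adjoint, which is then automatically a join-preserving retraction. This buys you exactly what the paper omits---join-preservation comes for free rather than as a case-check---and in fact if one unwinds your formula $r(x)=\bigwedge\{y\in L:y\geq x\}$ one recovers precisely the paper's explicit $j$ (e.g.\ $r(a_{n-2,n})=a_{n-1,n}$ since $a_{n-1,n}$ is the least element of $L$ dominating $a_{n-2,n}$). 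The paper's version is shorter to state; yours explains why such a splitting must exist and would generalise immediately to any sublattice of a finite distributive lattice containing the top and bottom.
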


\begin{proof}
It is easy to see that the map $j:D_n\ra \D_0$ given by
\[j(a_{kl})=\begin{cases}
A_{kl} & k,l\leq 2,\\
A_{33} & \mbox{if }a_{12}<a_{kl}\leq a_{n-1,n-1}\mbox{ and } a_{kl}\ne a_{22}\\ 
A_{34} & a_{kl}=a_{n-1,n}\mbox{ or }a_{kl}=a_{n-2,n},\\
A_{43} & a_{kl}=a_{n,n-1},\\
A_{44} & a_{kl}=a_{nn}
\end{cases}\]
provides a left inverse.
\end{proof}

\section{\texorpdfstring{$\mathbb{F}_{\infty}$}{Foo}-modules}
In this section, we introduce our motivating example, the category of finitely generated $\F_\infty$-modules. Our main reference for this section is $\cite{habjuh2018}$. 

We start with the definition of the generalized ring, $\F_\infty$ (see \cite{durov2007}, \textbf{5.1.16}).

\begin{definition}
The \textbf{generalized field} or \textbf{field} $\F_\infty$ is defined as the set $\{-1,0,1\}$ equipped with three operations: a unary $-$, and a binary $\plus$ and $\cdot$, with the following properties:
\begin{itemize}
\item $(a\plus b)\plus c=a\plus (b\plus c)$, $a\plus b=b\plus a$;
\item $0\plus a=a\plus 0=0$, hence $0$ is an \textbf{absorbing element};
\item $a\plus a=a$;
\item $a\plus(-a)=0$;
\item $-(1)=-1$, $-(-1)=1$, $-0=0$;
\item $(a\cdot b)\cdot c=a\cdot (b\cdot c)$, $a\cdot b=b\cdot a$;
\item $1\cdot a=a\cdot 1=a$;
\item $(-1)\cdot(-1)=1$.
\end{itemize}
\end{definition}

We remark that the operator $\plus$ is unlike the ordinary addition: the element $0$ is an absorbing element ($0\plus a=0$); and the operation $-$ is not the additive inverse, for instance, $(a\plus a)\plus (-a)$ is in general not $a$, but rather $a\plus (a\plus (-a))=a\plus 0=0$. 

This discussion leads us to the definition of a module over $\F_\infty$ (see \cite{durov2007}, \textbf{4.3.7}).

\begin{definition}
\label{def:module}
An \textbf{$\F_\infty$-module} is a structure $(V,0,-,\plus)$ such that:
\begin{itemize}
\item $(a\plus b)\plus c=a\plus (b\plus c)$, $a\plus b=b\plus a$;
\item $a\plus a=a$, $a\plus (-a)=0$;
\item $-(a\plus b)=(-a)\plus (-b)$; $-(-a)=a$.
\end{itemize}
Again, the notions of \textbf{submodule}, \textbf{congruence}, \textbf{quotient module}, \textbf{module homomorphism} and \textbf{finitely generated modules} are defined as usual.
\end{definition}

We remark that contrary to usual conventions, $0$ is not a neutral element, and $-a$ is not an additive inverse. In fact, an $\F_\infty$-module structure on a set $V$ gives rise to a natural partial order on $V$ given by $a\leq b$ if $a+b=b$; in this partial order $0$ is the \textbf{maximal} element.

We, again, consider projective $\F_\infty$-modules, modules with the usual lifting property. Similarly as with usual modules, a finitely generated $\F_\infty$-module $P$ is projective if and only if there exists a splittable surjection $F\rightarrow P$ from a free $\F_\infty$-module $F$ to $P$.

\begin{Example}\label{ex:f}
We modify Example \ref{ex:b} to obtain projective $\F_\infty$-modules. Let $D_n$ be the projective $\B$-module as in Example \ref{ex:b}. For each element $a_{ij}$ we add another element $-a_{ij}$. The addition $\plus$ is defined in an opposite fashion as before, explicitly, on the $a_{ij}$, we have
\[a_{ij}\plus a_{kl}=a_{\min(i,k), \min(j,l)},\]
on the $-a_{ij}$, we have
\[(-a_{ij})\plus (-a_{kl})=-a_{\min(i,k), \min(j,l)},\]
and finally
\[a_{ij}\plus (-a_{kl})=0.\]
In this fashion, we obtain an $\F_\infty$-module, which we call $E_n$ (the figure below illustrates the module $E_3$).

Again, there exists a surjection $g:\F_\infty[A_1,A_2,...,A_{2n-1}]\ra E_n$ generated by the images
\[g(A_1)=a_{nn}, g(A_2)=a_{n,n-1}, g(A_3)=a_{n-1,n},...\]
\[g(A_{2i})=a_{n-i,n-i+2}, g(A_{2i+1})=a_{n-i+1,n-i}.\]
This surjection splits, the map $h:E_n\ra \F_\infty[A_1,A_2,...,A_{2n-1}]$ given by the images
\[h(a_{nn})=A_1, h(a_{n,n-1})=A_1\plus A_2, h(a_{n-1,n})=A_1\plus A_3,...\]
\[h(a_{n-i,n-i+2})=A_1\plus A_2\plus ...\plus A_{2i-2}\plus A_{2i},h(a_{n-i+1,n-1})=A_1\plus ...\plus A_{2i-1}\plus A_{2i+1}\]
provides a right inverse.
\begin{center}
    \definecolor{xdxdff}{rgb}{0,0,0}
\definecolor{ududff}{rgb}{0,0,0}
\definecolor{cqcqcq}{rgb}{0,0,0}
\begin{tikzpicture}[line cap=round,line join=round,>=triangle 45,x=0.5cm,y=0.5cm]
\clip(6.673223054618417,-5.137551424556686) rectangle (17.17897535725542,18.24760788413266);
\draw [line width=2.pt] (16.,12.)-- (12.,8.);
\draw [line width=2.pt] (10.,14.)-- (14.,10.);
\draw [line width=2.pt] (10.,10.)-- (12.,8.);
\draw [line width=2.pt] (12.,16.)-- (16.,12.);
\draw [line width=2.pt] (12.,16.)-- (10.,14.);
\draw [line width=2.pt] (10.,10.)-- (14.,14.);
\draw [line width=2.pt] (12.,8.)-- (12.,4.);
\draw [line width=2.pt] (12.,4.)-- (8.,0.);
\draw [line width=2.pt] (8.,0.)-- (12.,-4.);
\draw [line width=2.pt] (12.,4.)-- (14.,2.);
\draw [line width=2.pt] (10.,-2.)-- (14.,2.);
\draw [line width=2.pt] (10.,2.)-- (14.,-2.);
\draw [line width=2.pt] (12.,-4.)-- (14.,-2.);
\begin{scriptsize}
\draw [fill=ududff] (12.,8.) circle (2.5pt);
\draw[color=ududff] (12.79700685660759,7.763624437166554) node {$a_{11}$};
\draw [fill=ududff] (14.,10.) circle (2.5pt);
\draw[color=ududff] (14.834755945813466,10.014645006989227) node {$a_{12}$};
\draw [fill=ududff] (12.,12.) circle (2.5pt);
\draw[color=ududff] (12.79560306398616,12.023797888816526) node {$a_{22}$};
\draw [fill=ududff] (10.,10.) circle (2.5pt);
\draw[color=ududff] (9.124304156939111,9.681187968222842) node {$a_{21}$};
\draw [fill=ududff] (14.,14.) circle (2.5pt);
\draw[color=ududff] (14.793073815967668,14.182857991569035) node {$a_{23}$};
\draw [fill=ududff] (16.,12.) circle (2.5pt);
\draw[color=ududff] (16.376994750108,12.765665576811902) node {$a_{13}$};
\draw [fill=ududff] (10.,14.) circle (2.5pt);
\draw[color=ududff] (9.249350546476506,14.683043549718613) node {$a_{32}$};
\draw [fill=ududff] (12.,16.) circle (2.5pt);
\draw[color=ududff] (12.62560306398616,16.266964483858942) node {$a_{33}$};
\draw[color=black] (13.625974180285318,10.848287603905188) node {$$};
\draw[color=black] (11.625231947687004,11.89034085005014) node {$$};
\draw[color=black] (10.624860831387847,8.889227501152678) node {$$};
\draw[color=black] (13.625974180285318,13.89108308264845) node {$$};
\draw[color=black] (10.624860831387847,15.85014318540096) node {$$};
\draw[color=black] (12.583920934140362,11.89034085005014) node {$$};
\draw [fill=ududff] (12.,6.) circle (2.5pt);
\draw[color=ududff] (12.792146025219775,6.263438879016976) node {$0$};
\draw [fill=ududff] (12.,4.) circle (2.5pt);
\draw[color=ududff] (12.792146025219775,4.262696646418669) node {$-a_{11}$};
\draw [fill=ududff] (10.,2.) circle (2.5pt);
\draw[color=ududff] (9.79140379262146,2.76195441382036) node {$-a_{12}$};
\draw [fill=ududff] (14.,2.) circle (2.5pt);
\draw[color=ududff] (14.29288825781809,2.76195441382036) node {$-a_{21}$};
\draw [fill=xdxdff] (12.,0.) circle (2.5pt);
\draw[color=xdxdff] (11.892146025219775,0.8612121812220518) node {$-a_{22}$};
\draw [fill=xdxdff] (8.,0.) circle (2.5pt);
\draw[color=xdxdff] (7.790661560023146,0.7612121812220518) node {$-a_{13}$};
\draw [fill=ududff] (10.,-2.) circle (2.5pt);
\draw[color=ududff] (9.89140379262146,-1.1395300513762564) node {$-a_{23}$};
\draw [fill=ududff] (12.,-4.) circle (2.5pt);
\draw[color=ududff] (11.892146025219775,-3.240272283974565) node {$-a_{33}$};
\draw [fill=ududff] (14.,-2.) circle (2.5pt);
\draw[color=ududff] (14.29288825781809,-1.2395300513762564) node {$-a_{32}$};
\draw[color=black] (11.416821298458013,6.388299710404794) node {$$};
\draw[color=black] (9.62448971508869,2.845318673511956) node {$$};
\draw[color=black] (9.62448971508869,-2.1148547781380165) node {$$};
\draw[color=black] (12.62560306398616,2.8870008033577546) node {$$};
\draw[color=black] (12.583920934140362,-0.11411254553970801) node {$$};
\draw[color=black] (11.625231947687004,-0.11411254553970801) node {$$};
\draw[color=black] (13.584292050439519,-3.1152258944371702) node {$$};
\end{scriptsize}
\end{tikzpicture}
\end{center}
\end{Example}

As before, we have the following property.

\begin{proposition}\label{prop:fnonnot}
Let $E_n$ and $E_m$ be $\F_\infty$-modules as in Example \ref{ex:f}, with elements $a_{11}$, ..., $a_{nn}$, and $b_{11}$, ..., $b_{mm}$ respectively. Then, any injective $\F_\infty$-module morphism $f:E_n\ra E_m$ satisfying
\[f(a_{11})=b_{11}, f(a_{12})= b_{12}, f(a_{21})= b_{21}, f(a_{22})= b_{22}, \mbox{and}\]
\[f(a_{n-1,n-1})=b_{m-1,m-1}, f(a_{n-1,n})= b_{m-1,m}, f(a_{n, n-1})=b_{m, m-1}, f(a_{nn})= b_{mm}\]
is the identity morphism (and hence $n=m$). \qed
\end{proposition}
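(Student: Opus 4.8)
The plan is to reduce Proposition \ref{prop:fnonnot} to the already-proven Proposition \ref{prop:bnonnot} by observing that an $\F_\infty$-module morphism $f\colon E_n\to E_m$ fixing the prescribed elements is determined by its restriction to the "positive cone" $\{a_{ij}\}\cup\{0\}$, which is exactly a copy of $D_n$ with the order reversed. First I would record the structural fact that, as a set, $E_n=\{a_{ij}\}\sqcup\{0\}\sqcup\{-a_{ij}\}$, that $\plus$ restricted to the positive part $P^+:=\{a_{ij}\}$ coincides (after reversing the partial order) with the join $\vee$ of $D_n$, and likewise on the negative part $P^-:=\{-a_{ij}\}$; moreover any $\plus$-combination that mixes a positive and a negative generator collapses to $0$. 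Since $f$ is an $\F_\infty$-module map it commutes with the unary $-$, so $f(-a_{ij})=-f(a_{ij})$; hence $f$ is entirely determined by the map $f^+\colon P^+\cup\{0\}\to P^+_{E_m}\cup\{0\}$, $a_{ij}\mapsto f(a_{ij})$.

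The key steps, in order, are: (1) show that $f$ maps $P^+$ into $P^+_{E_m}$ (i.e. it cannot send some $a_{ij}$ into the negative cone or to $0$): this follows because $a_{11}$ is sent to $b_{11}$, every $a_{ij}\geq a_{11}$ is below $0$ and above $a_{11}$ in the $\F_\infty$-order, and $f$ preserves the relation $x\plus y=y$; an element mapping into $P^-_{E_m}$ or onto $0$ would force $f(a_{ij})\plus b_{11}=0$, contradicting injectivity together with $f(a_{11})=b_{11}$. (2) Deduce that $f^+\colon P^+\to P^+_{E_m}$ is an injective map compatible with $\plus|_{P^+}$, hence — translating through the order-reversing identification $P^+\cong D_n$, $P^+_{E_m}\cong D_m$ — an injective $\B$-module morphism $D_n\to D_m$. (3) Check that the boundary normalization on $f$ translates exactly into the boundary normalization required in Proposition \ref{prop:bnonnot}: $f(a_{11})=b_{11}$, $f(a_{12})=b_{12}$, etc., are literally the hypotheses of that proposition. (4) Apply Proposition \ref{prop:bnonnot} to conclude $f^+=\id$ and $n=m$; then since $f(-a_{ij})=-f^+(a_{ij})=-a_{ij}$ and $f(0)=0$, we get $f=\id$ on all of $E_n$.

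The main obstacle I anticipate is step (1) — ruling out that $f$ sends part of the positive cone into $\{0\}$ or into the negative cone. The subtlety is that $\F_\infty$-module maps need not be order-preserving in the naive sense, and $0$ being absorbing means a single "wrong" value can contaminate a whole sublattice. The clean way around this is to argue locally: for each $a_{ij}$ one has $a_{ij}\plus a_{11}=a_{11}$ in $E_n$ (since $a_{11}$ is the minimum of $P^+$, hence the maximum in the $\F_\infty$-order among positive elements... more precisely $a_{11}\leq a_{ij}$), so $f(a_{ij})\plus b_{11}=b_{11}$; if $f(a_{ij})\in P^-_{E_m}$ then $f(a_{ij})\plus b_{11}=0\neq b_{11}$, and if $f(a_{ij})=0$ then again $0\plus b_{11}=0\ne b_{11}$ (as $b_{11}\ne 0$), contradiction. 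Once this is in place the rest is a straightforward transcription, and the "\qed" already present in the statement suggests the authors regard it as a routine variant of the $\B$-case — the proof essentially writes itself via the dictionary $E_n\leftrightarrow$ two order-reversed copies of $D_n$ glued at $0$.
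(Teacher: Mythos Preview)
Your reduction is correct, but it is packaged differently from the paper. The paper does not invoke Proposition~\ref{prop:bnonnot} as a black box; it simply reruns the same recursive argument inside $E_n$, the sole change being that the induction starts from $a_{n-2,n}$ rather than $a_{13}$ (since $\plus$ on the positive cone computes $\min$ of indices, the ``first undetermined element'' is pinned down from the top square rather than the bottom one). Your route and the paper's are in fact the same argument viewed through the self-duality of the poset $D_n$: the map $\sigma(a_{ij})=a_{\,n+1-j,\;n+1-i}$ is an order-reversing involution of $D_n\setminus\{O\}$ carrying $a_{13}$ to $a_{n-2,n}$, and conjugating by $\sigma$ converts the paper's recursion into that of Proposition~\ref{prop:bnonnot}. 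What your approach buys is a genuine reduction rather than a rerun; what the paper's approach buys is that one never needs to check that $D_n$ is self-dual.

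Two places in your sketch deserve tightening. In step~(2) the order-reversing identification $(P^+\cup\{0\})^{\op}\cong D_n$ must be made explicit, because the naive reversal $(i,j)\mapsto(n{+}1{-}i,\,n{+}1{-}j)$ does \emph{not} preserve the index set of $D_n$ (for instance it would send $a_{13}$ to the nonexistent $a_{n,\,n-2}$); one really needs the swap-and-reverse $\sigma$ above. In step~(3) the boundary conditions are not carried over \emph{literally}: under $\sigma$ the bottom square $\{a_{11},a_{12},a_{21},a_{22}\}$ and the top square $\{a_{n-1,n-1},a_{n-1,n},a_{n,n-1},a_{nn}\}$ are interchanged. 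Since both squares appear among the hypotheses of both propositions, the full set of eight conditions is nonetheless transported and your step~(4) goes through. Your argument for step~(1) is clean and is the one genuinely new ingredient beyond the $\B$-case.
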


We omit the proof of the proposition above, it follows the same strategy as in Proposition \ref{prop:bnonnot}, but in this case, the recursive process begins with $a_{n-2,n}$ instead of $a_{13}$.

\begin{Example}
As in Example \ref{ex:bbmod} we define an $\F_\infty$-module of 17 elements depicted below. We denote this module by $\E_0$. For every $n>3$, we have an injection $\E_0\ra E_n$ defined as 
\[k(A_{11})=a_{11}, k(A_{12})=a_{12},k(A_{21})=a_{21}, k(A_{22})=a_{22},\]
\[k(A_{33})=a_{n-1,n-1}, k(A_{34})=a_{n-1,n}, k(A_{43})=a_{n,n-1}, k(A_{44})=a_{nn}\]
and $k(-A_{ij})=-k(A_{ij})$. This injection is a splittable injection as in Lemma \ref{lem:splinjb}, we omit the proof of this statement, it is completely parallel (reversed).

\begin{center}
\definecolor{xdxdff}{rgb}{0,0,0}
\definecolor{ududff}{rgb}{0,0,0}
\definecolor{cqcqcq}{rgb}{0,0,0}
\begin{tikzpicture}[line cap=round,line join=round,>=triangle 45,x=1.0cm,y=1.0cm]
\clip(-0.72,-4.3) rectangle (10,8.52);
\draw [line width=2.pt] (5.,8.)-- (4.,7.);
\draw [line width=2.pt] (4.,7.)-- (5.,6.);
\draw [line width=2.pt] (5.,8.)-- (6.,7.);
\draw [line width=2.pt] (6.,7.)-- (5.,6.);
\draw [line width=2.pt] (5.,6.)-- (5.,5.);
\draw [line width=2.pt] (5.,5.)-- (4.,4.);
\draw [line width=2.pt] (4.,4.)-- (5.,3.);
\draw [line width=2.pt] (5.,5.)-- (6.,4.);
\draw [line width=2.pt] (6.,4.)-- (5.,3.);
\draw [line width=2.pt] (5.,3.)-- (5.,1.);
\draw [line width=2.pt] (5.,1.)-- (4.,0.);
\draw [line width=2.pt] (4.,0.)-- (5.,-1.);
\draw [line width=2.pt] (5.,1.)-- (6.,0.);
\draw [line width=2.pt] (6.,0.)-- (5.,-1.);
\draw [line width=2.pt] (5.,-1.)-- (5.,-2.);
\draw [line width=2.pt] (5.,-2.)-- (4.,-3.);
\draw [line width=2.pt] (4.,-3.)-- (5.,-4.);
\draw [line width=2.pt] (5.,-4.)-- (6.,-3.);
\draw [line width=2.pt] (6.,-3.)-- (5.,-2.);
\begin{scriptsize}
\draw [fill=ududff] (4.,7.) circle (2.5pt);
\draw[color=ududff] (4.56,7.11) node {$A_{43}$};
\draw [fill=ududff] (5.,8.) circle (2.5pt);
\draw[color=ududff] (5.14,8.37) node {$A_{44}$};
\draw [fill=ududff] (6.,7.) circle (2.5pt);
\draw[color=ududff] (6.4,7.09) node {$A_{34}$};
\draw [fill=ududff] (5.,6.) circle (2.5pt);
\draw[color=ududff] (5.4,6.00) node {$A_{33}$};
\draw [fill=ududff] (5.,5.) circle (2.5pt);
\draw[color=ududff] (5.38,5.11) node {$A_{22}$};
\draw [fill=ududff] (4.,4.) circle (2.5pt);
\draw[color=ududff] (4.42,4.11) node {$A_{21}$};
\draw [fill=ududff] (5.,3.) circle (2.5pt);
\draw[color=ududff] (5.42,3.09) node {$A_{11}$};
\draw [fill=ududff] (6.,4.) circle (2.5pt);
\draw[color=ududff] (6.38,4.05) node {$A_{12}$};
\draw [fill=ududff] (5.,2.) circle (2.5pt);
\draw[color=ududff] (5.36,2.15) node {$0$};
\draw [fill=ududff] (5.,1.) circle (2.5pt);
\draw[color=ududff] (5.4,1.31) node {$-A_{11}$};
\draw [fill=xdxdff] (4.,0.) circle (2.5pt);
\draw[color=xdxdff] (4.5,0.07) node {$-A_{12}$};
\draw [fill=ududff] (6.,0.) circle (2.5pt);
\draw[color=ududff] (6.48,0.11) node {$-A_{21}$};
\draw [fill=ududff] (5.,-1.) circle (2.5pt);
\draw[color=ududff] (5.38,-0.91) node {$-A_{22}$};
\draw [fill=ududff] (5.,-2.) circle (2.5pt);
\draw[color=ududff] (5.38,-1.93) node {$-A_{33}$};
\draw [fill=ududff] (4.,-3.) circle (2.5pt);
\draw[color=ududff] (4.46,-2.87) node {$-A_{34}$};
\draw [fill=ududff] (6.,-3.) circle (2.5pt);
\draw[color=ududff] (6.42,-2.95) node {$-A_{43}$};
\draw [fill=ududff] (5.,-4.) circle (2.5pt);
\draw[color=ududff] (5.5,-3.93) node {$-A_{44}$};
\end{scriptsize}
\end{tikzpicture}
\end{center}
\end{Example}

\section{Representation categories}
We consider the categories, $F(R)$ (and $FI(R)$), of finite rank free $R$-modules with morphisms being injections (or splittable injections resp.) for a generalized ring. Similarly, let $P(R)$ (and $PI(R)$), denote the category of finitely generated projective $R$-modules with morphisms being injection (or splittable injection resp.) for a generalized ring. In this section, we show that the categories $F(\B)$, $F(\F_\infty)$, $FI(\B)$ and $FI(\F_\infty)$ are locally Noetherian, however, the categories $P(\B)$, $PI(\B)$, $P(\F_\infty)$ and $PI(\F_\infty)$ are not locally Noetherian. This gives another illustration of the strange homological behavior of the generalized rings $\B$ and $\F_\infty$. 

We start by recalling the basic definitions of \cite{samsno2017} and \cite{putman2017} concerning representation categories. Let $\CC$ be a category and $\Mod_k$ the category of (left) modules over a (left) Noetherian ring $k$. The \textbf{representation category}, $\Rep_k(\CC)$ is defined as the category whose objects are covariant functors $\CC\ra \Mod_k$ and whose morphisms are natural transformation between functors. Objects of the representation category $\Rep_k(\CC)$ are called \textbf{$\CC$-modules}. For instance, for every $X\in \Ob(\CC)$ we obtain a $\CC$-module (called the \textbf{principal project module}) defined as follows. For an object $Y\in \Ob(\CC)$, we assign the vector space generated by all maps $f:X\ra Y$ in $\CC$:
\[\Princ{X}(Y) = k[\Hom_{\mathcal{C}}(X,Y)]=\bigoplus_{f\colon X \rightarrow Y} k\cdot e_f\]
and for a morphism $g:Y\ra Z$ we assign the $k$-module morphism
\[k[\Hom_{\mathcal{C}}(X,Y)] \rightarrow k[\Hom_\CC(X,Z)]\colon e_f \mapsto e_{g\circ f}.\]
The representation category, $\Rep_k(\CC)$ is an Abelian category, with kernels, cokernels, images, direct sums, ... calculated point-wise. We say that a $\CC$-module is \textbf{finitely generated} if it is a quotient of a finite direct sum of $\Princ{X}$. This definition is equivalent to the definition given in the Introduction (see \cite{samsno2017}).

We say that the representation category $\Rep_{k}(\mathcal{C})$ is \textbf{Noetherian} if every submodule of a finitely generated $\CC$-module is also finitely generated. Similarly, we say that $\CC$ is \textbf{locally Noetherian} if the representation category $\Rep_k(\CC)$ is Noetherian for every (left-)Noetherian ring $k$.

\subsection{Negative results}
The following proposition shows that Property (G2) of \cite{samsno2017} is a necessary condition for a category to be locally Noetherian. For the sake of completeness, we provide the proof of this proposition. 

\begin{proposition}\label{prop: non noetherian}
Let $\mathcal{C}$ be an essentially small category. Assume $\mathcal{C}$ contains a fixed object $X_0$ and countably infinity distinct objects $Y_i$. If for each $i$ there exists a morphism $f_i \in \Hom_{\mathcal{C}}(X_0,Y_i)$ that does not factor as $X_0 \rightarrow Y_j \rightarrow Y_i$ for some $j < i$, then $\mathcal{C}$ is not locally Noetherian.
\end{proposition}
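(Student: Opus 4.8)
The plan is to produce, over a fixed nonzero Noetherian coefficient ring $k$ (say $k=\Z$), a finitely generated $\mathcal{C}$-module containing a submodule that is not finitely generated; this immediately contradicts Noetherianity of $\Rep_k(\mathcal{C})$ and hence local Noetherianity of $\mathcal{C}$. The natural ambient module is the principal projective module $\Princ{X_0}$, and the bad submodule will be assembled from the morphisms $f_i$.

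First I would set $P=\Princ{X_0}$ and, for each $n\ge 1$, let $M_n\subseteq P$ be the $\mathcal{C}$-submodule generated by the elements $e_{f_1}\in P(Y_1),\dots,e_{f_n}\in P(Y_n)$. Since $P$ is the functor $Y\mapsto k[\Hom_{\mathcal{C}}(X_0,Y)]$ with $P(g)(e_f)=e_{g\circ f}$, the value of $M_n$ at an object $Z$ is the $k$-span of the vectors $e_{g\circ f_i}$ with $1\le i\le n$ and $g\colon Y_i\to Z$. Put $M=\bigcup_n M_n$, an increasing union of submodules of $P$.

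The key step is to check that $e_{f_{n+1}}\notin M_n(Y_{n+1})$ for every $n$, so that $M_n\subsetneq M_{n+1}$. Evaluating at $Z=Y_{n+1}$, the module $P(Y_{n+1})=k[\Hom_{\mathcal{C}}(X_0,Y_{n+1})]$ has the morphisms $h\colon X_0\to Y_{n+1}$ as a $k$-basis, and $M_n(Y_{n+1})$ is spanned by the \emph{basis} vectors $e_{g\circ f_i}$ with $i\le n$ and $g\colon Y_i\to Y_{n+1}$. Hence $e_{f_{n+1}}$ lies in $M_n(Y_{n+1})$ if and only if $f_{n+1}=g\circ f_i$ for some $i\le n$ and some $g\colon Y_i\to Y_{n+1}$, i.e. if and only if $f_{n+1}$ factors as $X_0\to Y_i\to Y_{n+1}$ with $i<n+1$ — exactly what the hypothesis forbids. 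I expect this bookkeeping (the observation that the natural spanning set of $M_n(Y_{n+1})$ consists of basis vectors, so that membership is detected purely combinatorially by factorizations) to be the only point needing care; everything else is formal.

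Finally I would conclude as follows. If $M$ were finitely generated, say by finitely many elements $\xi_1,\dots,\xi_r$ with $\xi_j\in M(Z_j)$, then since $M=\bigcup_n M_n$ is a directed union each $\xi_j$ lies in some $M_{n_j}(Z_j)$; taking $N=\max_j n_j$, all the $\xi_j$ lie in the subfunctor $M_N$, forcing $M=M_N$ and contradicting $M_N\subsetneq M_{N+1}\subseteq M$. Thus $M\subseteq P$ is a non-finitely-generated submodule of the finitely generated $\mathcal{C}$-module $\Princ{X_0}$, so $\Rep_{\Z}(\mathcal{C})$ is not Noetherian and $\mathcal{C}$ is not locally Noetherian. (Essential smallness of $\mathcal{C}$ is used only to ensure that $\Rep_k(\mathcal{C})$ is a genuine abelian category to begin with.)
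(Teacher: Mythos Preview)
Your proof is correct and follows the same strategy as the paper: build a non-finitely-generated submodule of the principal projective $\Princ{X_0}$ and use the non-factorization hypothesis to show that the basis vector $e_{f_{N+1}}$ cannot lie in the part generated at levels $\le N$. The only cosmetic difference is that the paper's submodule $M$ is generated by all of $\bigcup_i \Princ{X_0}(Y_i)$ rather than by just the elements $e_{f_i}$, but the key combinatorial step (membership of a basis vector in a span of basis vectors forces an equality, hence a forbidden factorization) is identical.
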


\begin{proof}
Consider the principal projective module $\Princ{X_0} \in \Rep_k(\mathcal{C})$ and the submodule $M$ generated by $\bigcup^{\infty}_{i =1} \Princ{X_0}(Y_i)$. Suppose $M$ is finitely generated by $\{\alpha_1, \ldots, \alpha_{\ell}\}$, where $\alpha_j \in M(X_j)$ for some object $X_j \in \mathcal{C}$. By the definition of $M$, each $\alpha_j$ is equal to a finite sum of the form  
\[\sum^{N_j}_{i=1} \sum_{g_{ij} \colon Y_i \rightarrow X_j} M(g_{ij})(\beta_{g_{ij}}),\]
where $\beta_{g_{ij}} \in \Princ{X_0}(Y_i)$. Hence, we conclude $M$ is also generated by $\bigcup^{N}_{i =1} \Princ{X_0}(Y_i)$, where $N = \max \{N_j \mid j \in \{1,2, \ldots, \ell\}\}$. However, by assumption $e_{f_{N+1}} \in \Princ{X_0}(Y_{N+1})$ can not be generated by these elements. We conclude that $M$ is not finitely generated and therefore that $\CC$ is not locally Noetherian.
\end{proof}

The proposition above implies that $P(\B)$, $PI(\B)$, $P(\F_\infty)$ and $PI(\F_\infty)$ are not locally Noetherian.

\begin{corollary} The categories
$P(\B)$, $PI(\B)$, $P(\F_\infty)$ and $PI(\F_\infty)$ are not locally Noetherian.
\end{corollary}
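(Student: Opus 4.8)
The plan is to derive the corollary from Proposition \ref{prop: non noetherian}: I will produce, in each of the four categories, a fixed object $X_0$, countably many pairwise distinct objects $Y_i$, and morphisms $f_i\colon X_0\to Y_i$ none of which factors through an earlier $Y_j$. I will carry this out for $P(\B)$ and $PI(\B)$ and only indicate the changes for $P(\F_\infty)$ and $PI(\F_\infty)$, since those are treated by the same argument with Proposition \ref{prop:fnonnot} and the splittable injections $\E_0\to E_n$ in place of Proposition \ref{prop:bnonnot} and those of Lemma \ref{lem:splinjb}, the differences being purely the bookkeeping of signs (with the recursion begun at $a_{n-2,n}$ rather than $a_{13}$).

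The choice will be $X_0=\D_0$, $Y_i=D_{i+3}$ for $i\ge 1$, and $f_i$ the splittable injection of Lemma \ref{lem:splinjb}. The routine points come first: each $D_n$ is a finitely generated projective $\B$-module by Example \ref{ex:b}; $\D_0$ is a retract of any $D_n$ with $n>3$ (Lemma \ref{lem:splinjb}), hence also finitely generated projective; the $Y_i$ are pairwise distinct because $|D_n|=4n-3$; each $f_i$, being a splittable injection, is a morphism both in $P(\B)$ and in $PI(\B)$; and all four categories are essentially small, as Proposition \ref{prop: non noetherian} requires. Because a splittable injection is in particular an injection, it is enough to prove that no $f_i$ factors as $\D_0\xrightarrow{g}D_{n_j}\xrightarrow{h}D_{n_i}$ with $g,h$ merely injective $\B$-module maps and $n_j:=j+3<n_i:=i+3$ — exactly what is required in $P(\B)$, and more than enough in $PI(\B)$ — after which Proposition \ref{prop: non noetherian} gives the result for both categories at once.

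For the non-factorization I would argue by contradiction, reducing to the recursion of Proposition \ref{prop:bnonnot}. Write $A_{\bullet\bullet}$, $a_{\bullet\bullet}$, $b_{\bullet\bullet}$ for the elements of $\D_0$, $D_{n_j}$, $D_{n_i}$, and recall that an injective $\B$-module homomorphism is an order embedding. Since $h(g(A_{11}))=b_{11}$ is the unique atom of $D_{n_i}$ and $a_{11}$ is the unique atom of $D_{n_j}$, one gets $g(A_{11})=a_{11}$ and $h(a_{11})=b_{11}$; symmetrically, $g(A_{44})$ is the top of $D_{n_j}$ and $h$ maps it to the top $b_{n_in_i}$. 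Next $g(A_{12}),g(A_{21})$ are forced to be the two atom-covers $a_{12},a_{21}$ of $D_{n_j}$ (an element strictly between an atom-cover and $a_{11}$ would map strictly between $b_{11}$ and $b_{12}$), so $h$ takes $\{a_{12},a_{21}\}$ onto $\{b_{12},b_{21}\}$ and $a_{22}$ to $b_{22}$. The asymmetric element $a_{13}$ — intrinsically the unique element of $D_{n_j}$ strictly above $a_{12}$ and not above $a_{22}$ — then forces $h(a_{12})=b_{12}$: were it $b_{21}$, $h(a_{13})$ would be an element of $D_{n_i}$ strictly above $b_{21}$ and not above $b_{22}$, and there is none; so also $h(a_{13})=b_{13}$. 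From here the recursion in the proof of Proposition \ref{prop:bnonnot} applies unchanged, yielding $h(a_{kl})=b_{kl}$ for every index pair occurring in $D_{n_j}$, in particular $h(a_{n_jn_j})=b_{n_jn_j}$. But $a_{n_jn_j}$ is the top of $D_{n_j}$, i.e. $g(A_{44})$, so $h(a_{n_jn_j})=b_{n_in_i}$ as well; hence $b_{n_jn_j}=b_{n_in_i}$ in $D_{n_i}$, forcing $n_j=n_i$ — contradiction.

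The main obstacle is the opening move of that last argument: unlike in Proposition \ref{prop:bnonnot}, neither $g$ nor $h$ is assumed to respect any "corner" normalization, so one must first extract, from intrinsic order-theoretic features of $D_n$ (the atom is unique, it has exactly two covers, and $a_{13}$ is asymmetrically placed), that $h$ is pinned down near the bottom to behave like the standard inclusion $D_{n_j}\hookrightarrow D_{n_i}$. Once that is established the earlier recursion does the rest, and the $\F_\infty$ case raises no new difficulty beyond signs, since $E_n$ carries the analogous extremal and asymmetry features and Proposition \ref{prop:fnonnot} supplies the matching recursion.
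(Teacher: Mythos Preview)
Your argument is correct and follows the same strategy as the paper: take $X_0=\D_0$ (resp.\ $\E_0$), $Y_i=D_{i+3}$ (resp.\ $E_{i+3}$), let $f_i$ be the splittable injections of Lemma~\ref{lem:splinjb}, and apply Proposition~\ref{prop: non noetherian}; the paper then simply invokes Propositions~\ref{prop:bnonnot}/\ref{prop:fnonnot} for the non-factorization and the Remark to pass between $P$ and $PI$. The one point you add --- and rightly flag as the main obstacle --- is something the paper leaves implicit: in a hypothetical factorization $f_i=h\circ g$ the map $h\colon D_{n_j}\to D_{n_i}$ is not \emph{a priori} known to satisfy the corner normalizations in the hypothesis of Proposition~\ref{prop:bnonnot}, and you correctly extract them from the intrinsic order structure (unique atom, its two covers, the asymmetric $a_{13}$) before running that proposition's recursion.
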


\begin{proof}
The remark below implies that it is enough to prove the statement for $P(\B)$ and $P(\F_\infty$). Finally, Proposition \ref{prop: non noetherian} combined with Proposition \ref{prop:bnonnot} and \ref{prop:fnonnot} imply that $P(\B)$ and $P(\F_\infty$) are not locally Noetherian (with $X_0$ being the projective module $\D_0$ or $\E_0$ respectively).
\end{proof}

\textbf{Remark:} If the category $\CC$ is a full subcategory of $\DD$, and $\CC$ satisfies the conditions of Proposition \ref{prop: non noetherian}, then $\DD$ also satisfies the conditions of Proposition \ref{prop: non noetherian} with the same objects. Similarly, if $\CC$ is a faithful subcategory of $\DD$ containing the objects $X_0$, the $Y_i$ and the morphisms $f_i:X_0\ra Y_i$ so that $\DD$ satisfies the conditions of Proposition \ref{prop: non noetherian}, then $\CC$ also satisfies the conditions of Proposition \ref{prop: non noetherian}. As an easy consequence, we obtain that the category of finite posets with injections and the category of finite lattices with injections are also not locally Noetherian (see, for instance, \cite{wiejar09}). Similarly, we obtain that the category of all finitely generated $\B$-modules (or $\F_\infty$-modules) with injection is also not locally Noetherian.

\subsection{Positive results}

We conclude the paper by showing that the category of free $\B$-modules (or $\F_\infty$-modules) of finite rank with (splittable) injections is locally Noetherian. We will only prove the statement for $\B$-modules, the same proof applies for $\F_\infty$-modules with minimal changes. The proof is similar to the proofs of Theorem 8.3.1. in \cite{samsno2017} and of Theorem C in \cite{putman2017}, we only highlight the key steps.

\begin{proposition}
The category of free $\B$-modules of finite rank with (splittable) injections is locally Noetherian.
\end{proposition}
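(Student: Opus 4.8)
The plan is to exhibit $F(\B)$ as a \emph{quasi-Gröbner category} in the sense of \cite{samsno2017}, and then to invoke the theorem there that $\Rep_k(\CC)$ is Noetherian for every Noetherian ring $k$ as soon as $\CC$ is quasi-Gröbner; the whole argument runs parallel to the treatment of $FI$ and $FI_d$ in \cite{samsno2017} and of the linear categories in \cite{putman2017}.

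First I would pin down the morphisms of $F(\B)$. A $\B$-linear map $\phi\colon\B^n\to\B^m$ is the same thing as a tuple of subsets $c_i=\phi(e_i)\subseteq[m]$, $1\le i\le n$, and the one genuinely $\B$-specific input is the elementary observation that $\phi$ is injective if and only if each $c_i$ admits a \emph{private element} $p_i\in c_i\setminus\bigcup_{j\ne i}c_j$: if $c_i\subseteq\bigcup_{j\ne i}c_j$ then $\phi$ identifies $e_1+\dots+e_n$ with $\sum_{j\ne i}e_j$, whereas if private elements exist then every $T$ of the form $\bigcup_{i\in S}c_i$ satisfies $S=\{i:p_i\in T\}$; moreover in the latter case $\phi$ is automatically split, by the retraction taking the $\ell$-th generator of $\B^m$ to $\sum\{e_i:p_i=\ell\}$. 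Hence the injective and the splittable-injective $\B$-linear maps coincide, $F(\B)=FI(\B)$, and it suffices to treat $F(\B)$; composition there is Boolean matrix multiplication of the defining matrices.

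I would then introduce a rigidified auxiliary category $\CC'$: its objects are the symbols $[n]$, $n\ge0$, and a morphism $[n]\to[m]$ is a pair $(\phi,(p_1<\dots<p_n))$ consisting of a splittable injection $\phi\colon\B^n\to\B^m$ together with an ordered choice of private elements, composition transporting the chosen private elements in the unique consistent way. Exactly as for $OI\to FI$, the point is that $\CC'$ is \emph{directed} (a $\CC'$-endomorphism of $[n]$ underlies an automorphism $e_k\mapsto e_{\sigma(k)}$ with $\sigma(1)<\dots<\sigma(n)$, hence is the identity) while the forgetful functor $\Phi\colon\CC'\to F(\B)$, $[n]\mapsto\B^n$, is essentially surjective. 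Next I would verify that $\Phi$ has property (F) of \cite{samsno2017}: over $\B^n$ the finite family of automorphisms $g_\sigma\colon e_k\mapsto e_{\sigma(k)}$, $\sigma\in S_n$, suffices, since any splittable injection $g$ out of $\B^n$ lifts through $\Phi$ after precomposition with the permutation sorting a chosen tuple of private elements of $g$. It then remains to prove that $\CC'$ is Gröbner: I would encode a morphism $(\phi,(p_k))\colon[n]\to[m]$ as a word of length $m$ over the finite alphabet $\Sigma_n=\{\bullet\}\sqcup 2^{[n]}$, putting $\bullet$ at the positions $p_1,\dots,p_n$ and the subset $\{k:\ell\in\phi(e_k)\}$ at every other position $\ell$; identify the divisibility preorder on these ``monomials'' with the subword order (postcomposition inserts letters and substitutes into the $\bullet$-slots); invoke Higman's lemma to see that the subword order on $\Sigma_n$-words is a well-partial-order, so $|\CC'_{[n]}|$ has no infinite antichain and no infinite strictly descending chain; and finally equip the monomials with an admissible well-order.

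The step I expect to be the main obstacle is this last one — producing a well-order on $\bigsqcup_m\Hom_{\CC'}([n],[m])$ that is compatible with postcomposition. In contrast to $OI$ or $OI_d$, where the non-$\bullet$ letters are inert, here the subset-decoration at a complementary position gets pulled back through $\phi$ under postcomposition, so a naive shortlex order need not be admissible, and extracting an admissible refinement is the technical heart of the matter; this is where I would follow the corresponding computations of \cite{samsno2017} and \cite{putman2017} most closely. Granting it, $\Phi$ exhibits $F(\B)$ as quasi-Gröbner, so $\Rep_k(F(\B))$ is Noetherian for every Noetherian ring $k$; that is, $F(\B)$, equivalently $FI(\B)$, is locally Noetherian. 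The case of $\F_\infty$ is the mirror image, obtained by reversing the partial order throughout.
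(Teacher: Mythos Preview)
Your reduction $F(\B)=FI(\B)$ via private elements is correct and tidy; it in fact makes the paper's second step (the distinct-rows factorisation used to pass from $FI(\B)$ to $F(\B)$) unnecessary.

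The route you take to local Noetherianity, however, is quite different from the paper's, and the gap you yourself flag is genuine. The paper does \emph{not} build a bespoke Gr\"obner cover of $F(\B)$. It instead invokes the known fact that $FS^{\op}$ (finite sets and surjections, opposite) is quasi-Gr\"obner \cite[Theorem~8.1.2]{samsno2017} and checks that the duality functor $\Phi\colon FS^{\op}\to FI(\B)$, $S\mapsto\Hom_{\B}(\B[S],\B)$, is essentially surjective with Property~(F). For the latter one dualises a splittable injection $\B^n\hookrightarrow\B[S]^*$ to a surjection $\B[S]\twoheadrightarrow(\B^n)^*$ and factors it through the free module on the image of the generators; since $(\B^n)^*$ is a \emph{finite set} there are only finitely many such factorisations. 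No new admissible order is constructed anywhere.

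In your approach, directedness of $\CC'$, Property~(F) for the forgetful functor, and the identification of divisibility with the Higman subword order on $\Sigma_n$-words are all correct (the last because the rows at the private positions $p_1,\dots,p_n$ are exactly the singletons $\{1\},\dots,\{n\}$, so every subset of $[n]$ is realisable as a union of rows). Thus (G2) does hold. But (G1) cannot be handled by ``following \cite{samsno2017} and \cite{putman2017} closely'': the admissible orders there rely precisely on the inertness you observe is absent. Concretely, with $n=2$ and the alphabet ordered so that $\{1\}\prec\{2\}\prec\{1,2\}$, take $f,f'\colon[2]\to[3]$ with words $\bullet\,\bullet\,\{1\}$ and $\bullet\,\bullet\,\{2\}$, and $g\colon[3]\to[4]$ with columns $d_1=\{2\}$, $d_2=\{1,3\}$, $d_3=\{1,4\}$ and $q=(2,3,4)$; then $gf$ has word $\{1,2\}\,\bullet\,\bullet\,\{1\}$ while $gf'$ has word $\{2\}\,\bullet\,\bullet\,\{2\}$, so shortlex gives $f\prec f'$ but $gf'\prec gf$. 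Producing an admissible order for $\CC'$, if one exists at all, would require an idea not present in your sources; the paper's $FS^{\op}$ argument sidesteps the question entirely.
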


\begin{proof}

We first show the statement in the case of splittable injections. Let $FS$ denote the category of finite sets with morphisms being surjections. By Theorem 8.1.2 in \cite{samsno2017}, the opposite category $FS^{\op}$ is quasi-Gröbner. Consider the essentially surjective functor $\Phi\colon FS^{\op} \rightarrow F(\B) \colon S \mapsto \B[S]^* = \Hom_{\B}(\B[S],\B)$.
Let $\B^n$ be a free $\B$-module and let $f \colon\B^n \rightarrow\B[S]^*$ be a splittable injection for a finite set $S$. 
The dual surjection $f^*\colon\B[S] \rightarrow (\B^n)^*$ factorises as $\B[S] \rightarrow \B[T] \rightarrow (\B^n)^*,$ where the first map is induced by a surjection of finite sets. 
As a consequence, $\Phi$ satisfies Property (F) of \cite{samsno2017} implying that $FI(\B)$ is locally Noetherian.
Now, we turn our attention to prove that $F(\B)$ is also locally Noetherian. Note that any map $f:\B^n\ra \B^m$ can be given by an $[m\times n]$ matrix, $A$, whose entries are $0$ or $1$. Let $l$ denote the number of distinct rows of this matrix. 
Note, that $l\leq 2^n$ and we obtain a map $\B^n\ra \B^l$ of $f$, given by the matrix $B$ consisting of the distinct rows of $A$. Moreover, we have a map $\B^l \ra \B^m$ sending the standard bases vector indexed by a row of $B$ to the sum of standard bases vector indexed by the equal rows in $A$. This map is a splittable injection and the two maps above yield a factorisation $\B^n\ra \B^l\ra \B^m$ of $f$. As a result, the forgetful functor $FI(\B)\ra F(\B)$ satisfies Property (F). Since this functor is also essentially surjective, we obtain that $F(\B)$ is also locally Noetherian.


\end{proof}

\bibliographystyle{siam}
\bibliography{refs.bib}

\Addresses
\end{document}